\documentclass[a4paper,10pt]{article}
\usepackage[english]{babel}
\usepackage[latin1]{inputenc}
\usepackage[T1]{fontenc}
\usepackage{amsmath}
\usepackage{comment}
\usepackage{amsfonts}
\usepackage{amsthm}
\usepackage{mathrsfs}
\usepackage{amssymb}
\usepackage{graphicx}
\usepackage{tikz-cd}
\usetikzlibrary{calc}
%\usepackage[mathscr]{euscript}
%\DeclareSymbolFont{rsfs}{U}{rsfs}{m}{n}
%\DeclareSymbolFontAlphabet{\mathscrsfs}{rsfs}
%
\usepackage{marginnote}

\theoremstyle{thmstyleone}%
\newtheorem{theorem}{Theorem}%  meant for continuous numbers
%%\newtheorem{theorem}{Theorem}[section]% meant for sectionwise numbers
%% optional argument [theorem] produces theorem numbering sequence instead of independent numbers for Proposition
% 
%%\newtheorem{proposition}{Proposition}% to get separate numbers for theorem and proposition etc.

\theoremstyle{thmstyletwo}%
\newtheorem{example}{Example}%
\newtheorem{remark}{Remark}%
\newtheorem{lemma}{Lemma}%

\theoremstyle{thmstylethree}%
\newtheorem{definition}{Definition}%

\renewcommand{\epsilon}{\varepsilon}

%opening
\title{Rademacher's Theorem for Calderon-Zygmund-type Spaces}

\author{T. Lamby\footnote{University of Luxembourg, Department of Mathematics -- Maison du Nombre, 6, Avenue de la Fonte, 4364 Esch-sur-Alzette, Luxembourg. thomas.lamby@uni.lu}}

\begin{document}

\maketitle

\begin{abstract}
Rademacher's Theorem can be interpreted as an almost-everywhere \emph{little-$o$ improvement principle}: if a function admits a uniform pointwise first-order Lipschitz control at every point, then this control improves to a vanishing one at almost every point. In the language of Calder\'on--Zygmund pointwise spaces, this means that
\[
f \in T^\infty_1(x) \quad \forall x \in \mathbb{R}^d
\qquad \Longrightarrow \qquad
f \in t^\infty_1(x) \quad \text{for a.e. } x \in \mathbb{R}^d.
\]

The purpose of this paper is to establish an analogous almost-everywhere improvement principle in a refined $L^p$ setting. We consider pointwise Calder\'on-Zygmund spaces $T^p_{\phi}(x)$ defined via polynomial approximation in $L^p$ with a function parameter $\phi$, allowing for fractional regularity indices and logarithmic corrections through Boyd functions. We prove that, under natural assumptions on $\phi$, the uniform membership
\[
f \in T^p_{\phi}(x) \quad \forall x \in E
\]
on a measurable set $E \subset \mathbb{R}^d$ implies an almost-everywhere improvement to a vanishing approximation rate, namely
\[
f \in t^p_{\phi,n+1}(x) \quad \text{for a.e. } x \in E,
\]
where $n < \underline{b}(\phi) \leq \overline{b}(\phi) < n+1$.

The proof combines measurability arguments, a generalized Whitney extension theorem, and fine properties of Sobolev spaces. We also show that this result is essentially sharp: in general, one cannot expect almost-everywhere membership in $t^p_{\phi,n}(x)$ for fractional indices, and explicit counterexamples are provided.
\end{abstract}

\noindent \textit{Keywords}: Rademacher's Theorem, Calder\'on-Zygmund Spaces, Whitney's Extension Theorem, Boyd functions

\section{Introduction}\label{sec1}

Pointwise Calder\'on--Zygmund spaces provide a flexible framework to quantify local regularity through polynomial approximation in $L^p$. They were originally introduced by Calder\'on and Zygmund \cite{Calderon:61}. Given a point $x_0 \in \mathbb{R}^d$, $p \in [1,\infty]$ and a real number $u \geq -d/p$, the space $T^p_u(x_0)$ consists of functions $f \in L^p(\mathbb{R}^d)$ for which there exists a polynomial $P$ of degree strictly less than $u$ such that
\begin{equation}\label{equa Tpu}
r^{-d/p}\|f-P\|_{L^p(B(x_0,r))}\leq C\,r^u \quad \forall r>0.
\end{equation}
If, in addition,
\begin{equation}\label{equa tpu}
r^{-d/p}\|f-P\|_{L^p(B(x_0,r))}=o(r^u) \quad \text{when } r\rightarrow 0^+,
\end{equation}
then $f$ is said to belong to the space $t^p_u(x_0)$.

These spaces allow one to describe the pointwise regularity of functions that may fail to be locally bounded (see for instance \cite{Jaffard:18,Seuret:17,Lamby:21}). Since the spaces $T^p_u(x_0)$ are decreasing with respect to $u$, one can define the \emph{$p$-exponent} of a function $f$ at $x_0$ by
\[
h_p^f(x_0)=\sup\{u\geq -d/p : f\in T^p_u(x_0)\}.
\]
When $p=\infty$, the space $T^\infty_u(x_0)$ coincides with the classical pointwise H\"older space $\Lambda^u(x_0)$.

\medskip

From this point of view, Rademacher's Theorem can be interpreted as an \emph{almost-everywhere improvement principle}. It states that if a function is uniformly Lipschitz, then its first-order approximation improves from a bounded rate to a vanishing one at almost every point. In terms of Calder\'on--Zygmund spaces, this means that
\[
f \in T^\infty_1(x) \quad \forall x \in \mathbb{R}^d
\qquad \Longrightarrow \qquad
f \in t^\infty_1(x) \quad \text{for a.e. } x \in \mathbb{R}^d,
\]
where the polynomial involved in the definition of $t^\infty_1(x)$ has degree at most~$1$.

This principle has been extended in several directions. In particular, Calder\'on showed that if $\Omega \subset \mathbb{R}^d$ is bounded and open, then every function in the Sobolev space $W^p_1(\Omega)$ is differentiable almost everywhere whenever $p>d$ \cite{Evans:15}. This result follows from the Lebesgue differentiation theorem together with Sobolev embeddings, and Rademacher's Theorem appears as a limiting case, since Lipschitz functions belong to $W^\infty_1(\Omega)$. Rademacher-type results also exist in more general metric settings, where differentiability is formulated in terms of metric differentials \cite{Cheeger:99}.

\medskip

The purpose of this paper is to establish an analogous almost-everywhere improvement principle in a refined $L^p$ framework. More precisely, we replace the Lipschitz assumption by a pointwise control in generalized Calder\'on--Zygmund spaces $T^p_\phi(x)$, where the power function $r^u$ in \eqref{equa Tpu} is replaced by a function parameter $\phi$. These spaces allow for fractional regularity indices and finer scales involving logarithmic corrections. They are defined using Boyd functions and will be introduced in detail in Section~\ref{sec2}.

Given a measurable set $E \subset \mathbb{R}^d$, the guiding question of this work is the following:
\begin{quote}
\emph{If a function $f$ satisfies $f \in T^p_\phi(x)$ for every $x \in E$, does this uniform pointwise control improve to a vanishing one at almost every point of $E$?}
\end{quote}
For integer indices, this question was answered positively by Ziemer in \cite{Ziemer:89} for the spaces $T^p_u(x)$ with $u \in \mathbb{N}_0$, yielding an $L^p$ version of Rademacher's Theorem.

\medskip

Our strategy follows the classical approach of Ziemer, adapted to the function-parameter setting. First, we show that the map
\[
x \longmapsto \|f\|_{T^p_\phi(x)}
\]
is measurable. Using Lusin's Theorem, this allows us to localize the problem on compact subsets where uniform bounds are available. We then rely on a generalized Whitney extension theorem to construct a smooth extension agreeing almost everywhere with $f$ on these subsets. Fine properties of Sobolev functions are subsequently used to obtain an almost-everywhere vanishing approximation rate for the extension, which is finally transferred back to the original function. 

For fractional indices, this approach yields an essentially optimal result. In general, one can only expect almost-everywhere membership in the space $t^p_{\phi,n+1}(x)$, where
\[
n < \underline{b}(\phi) \leq \overline{b}(\phi) < n+1,
\]
and where the polynomial appearing in the definition of the space has degree at most $n+1$. We further provide explicit counterexamples showing that this threshold cannot, in general, be lowered.

\medskip

Throughout the paper, $\mathcal{L}$ denotes the Lebesgue measure, $\mathbb{N}_0=\{0,1,2,3,\dots\}$ and $\mathbb{N}=\{1,2,3,\dots\}$.

\section{Calder\'on-Zygmund spaces with Function parameter}\label{sec2}

A crucial aspect in order to refine regularity spaces is the notion of Boyd functions \cite{Boyd:67,Boyd:69,Johnson:79,Krein:82,Merucci:84, Lamby:22, Lamby:24}, which serve as a replacement for the traditional functions $t \mapsto t^\theta$ typically used in these settings: a function $\phi : (0, \infty) \to (0, \infty)$ is a Boyd function if it is continuous, $\phi(1) = 1$ and 
\[
\overline{\phi} (t) := \sup_{s>0} \frac{\phi(st)}{\phi(s)} < \infty, \quad \forall t \in (0, \infty).
\]
The lower and upper Boyd indices of a Boyd function $\phi$ are then defined by
\[
 \underline{b}(\phi):=\sup_{t<1} \frac{\log \bar \phi(t)}{\log t} = \lim_{t\to 0} \frac{\log \bar \phi(t)}{\log t}
\]
and
\[
 \overline{b}(\phi):=\inf_{t>1} \frac{\log \bar \phi(t)}{\log t} = \lim_{t\to \infty} \frac{\log \bar \phi(t)}{\log t},
\]
respectively.

\begin{definition}\label{def T^p_phi(x_0)}
Let $x_0\in\mathbb{R}^d$, $p\in[1,\infty]$ and $\phi\in\mathcal{B}$ such that $\underline{b}(\phi)>-\frac{d}{p}$. A function $f\in L^p(\mathbb{R}^d)$ belongs to the space $T^p_\phi(x_0)$ if there exists a polynomial $P$ with degree strictly smaller than $\underline{b}(\phi)$ and a constant $C>0$ such that
\begin{equation}\label{equation de base espace tpphi}
r^{-d/p}\|f-P\|_{L^p(B(x_0,r))}\leq C\,\phi(r)\quad\forall r>0.
\end{equation}
\end{definition}
For example, when $\underline{b}(\phi)$ is non-integer, the polynomial involved is of degree at most $\lfloor \underline{b}(\phi)\rfloor$.
The condition $\underline{b}(\phi)>-\frac{d}{p}$ is here to ensure that the spaces are not degenerate. One can easily check that the polynomial appearing in (\ref{equation de base espace tpphi}) is unique. We set 
\[\Lambda^\phi(x_0)=T^\infty_\phi(x_0).\]
Let $f\in T^p_\phi(x_0)$ and $P$ be the polynomial relating to $f$, then $P$ can be written as
\[P(x)=\sum_{|\alpha|<\underline{b}(\phi)}\frac{D^\alpha P(x_0)}{|\alpha| !}(x-x_0)^\alpha.\]
We set
\[|f|_{T^p_\phi(x_0)}=\sup_{r>0}\phi(r)^{-1}r^{-d/p}\|f-P\|_{L^p(B(x_0,r))},\]
and
\[\|f\|_{T^p_\phi(x_0)}=\|f\|_{L^p(\mathbb{R}^d)}+\sum_{|\alpha|<\underline{b}(\phi)}\frac{|D^\alpha P(x_0)|}{|\alpha| !}+|f|_{T^p_\phi(x_0)}.\]
Then it is straightforward that $(T^p_\phi(x_0),\|\cdot\|_{T^p_\phi(x_0)})$ is a Banach space. Let $t^p_{\phi,n}(x_0)$, consisting of functions $f\in L^p(\mathbb{R}^d)$ such that there exists a polynomial $P$ of degree at most $n$ such that
\[r^{-d/p}\|f-P\|_{L^p(B(x_0,r))}=o(\phi(r))\text{\emph{ when }} r\rightarrow 0^+.\]
If $n<\underline{b}(\phi)$, it is easy to prove that $t^p_{\phi,n}(x_0)$ is a closed subspace of $T^p_{\phi}(x_0)$ with respect to the topology induced by the norm $\|\cdot\|_{T^p_{\phi}(x_0)}$ (see \cite{Loosveldt:20}). 

\begin{remark}
If $\phi(t)=t^u$, with $u \in \mathbb{N}_0$, then $t^p_{\phi,u}(x_0)$ is the usual space $t^p_u(x_0)$ of \cite{Ziemer:89}. 
Note that, in this case, the space $T^p_\phi(x_0)$ involves a polynomial of degree at most $u-1$ whereas the space $t^p_u(x_0)$ involves a polynomial of degree at most $u$, so that $t^p_u(x_0)$ is not a true subspace of $T^p_\phi(x_0)$.
\end{remark}

 These spaces are used to capture finer notions of regularity. For example (see \cite{Morters:10}), if $B$ is a Brownian motion on a probability space $(\Omega, \mathcal{F}, \mathbb{P})$, then almost surely and for almost every $t_0 \in \mathbb{R}$, there exists a constant $C > 0$ such that
\begin{equation}\label{equa Brownien}
|B(t) - B(t_0)| \leq C \big(|t - t_0| \log \log 1/|t - t_0|\big)^{1/2},
\end{equation}
whereas for every $t_0 \in \mathbb{R}$, the H\"older exponent of $B$ at $t_0$ equals $h(t_0) = \frac{1}{2}$. Therefore, if $\phi\in\mathcal{B}$ is such that $\phi(t)\sim (t \log \log 1/|t|)^{1/2}$ for all $t>0$, then $T^\infty_\phi$ is the adapted functional space that take into account the iterated logarithm in the pointwise regularity of $B$, a phenomenon that classical H\"older spaces fail to capture.

 \medskip
\noindent
The following result, proved in \cite{Loosveldt:20}, is a function-parameter 
version of Whitney's extension theorem. Roughly speaking, it shows that 
uniform $T^p_\phi$ conditions on a closed set $E$ ensure that the function 
coincides almost everywhere on $E$ with a $C^n$ function defined on a 
neighbourhood of $E$, together with quantitative control on finite differences 
of the derivatives of this $C^n$ extension. They have direct 
applications in the fine regularity theory with function parameters, in 
particular for obtaining pointwise regularity estimates for solutions of 
elliptic partial differential equations in the framework of $T^p_\phi$ spaces.

\begin{theorem}\label{Gen Whitney}\emph{(Generalization of Whitney's Extension Theorem)}
\\Let $E\subseteq \mathbb{R}^d$ be a closed set, $U=\{x\in\mathbb{R}^d\mid d(x,E)<1\}$, $p\in [1,\infty]$, $n\in\mathbb{N}_0$ and $\phi\in\mathcal{B}$ such that $n<\underline{b}(\phi)$.
\\If $f\in T^p_\phi(x_0)$ verifies $\|f\|_{T^p_\phi(x_0)}\leq M$ for $M>0$ and for all $x_0\in E$, then there exists $F\in C^n(U)$ such that $F=f$ almost everywhere on $E$. Moreover, if $m \in \mathbb{N}$ is such that $n < \underline{b}(\phi) \leq \overline{b}(\phi) < m$, then there exists a constant $C > 0$ such that for all $x \in U$ and $h \in \mathbb{R}^d\backslash\{0\}$ satisfying $[x, x + (m-n)h]\subseteq U$, we have
\[
|\Delta^{\,m-n}_h D^\alpha F(x)| \leq C \, \phi(|h|) \, |h|^{-n}
\]
for every multi-index $\alpha$ with $|\alpha| = n$.

\end{theorem}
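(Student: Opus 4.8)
The plan is to reproduce the architecture of Whitney's extension theorem, the genuinely new ingredient being a systematic use of the Boyd indices to control the geometric sums arising in the construction and to match the order $m-n$ of the finite difference to the smoothness encoded by $\phi$.

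First I would extract the Whitney compatibility conditions from the hypothesis. Given $x,y\in E$ with $r=|x-y|$, both $P_x$ and $P_y$ approximate $f$ on $B(y,r)\subseteq B(x,2r)$; the triangle inequality, the uniform bound $M$ and the doubling estimate $\phi(2r)\le\overline{\phi}(2)\,\phi(r)$ give $\|P_x-P_y\|_{L^p(B(y,r))}\le C\,M\,\phi(r)\,r^{d/p}$. Since $P_x-P_y$ is a polynomial of degree $<\underline{b}(\phi)$, a Markov-type inverse inequality on the ball turns this $L^p$ bound into coefficient bounds
\[
|D^\alpha P_x(y)-D^\alpha P_y(y)|\le C\,M\,\phi(|x-y|)\,|x-y|^{-|\alpha|}\qquad(|\alpha|<\underline{b}(\phi)).
\]
Writing $f_\alpha(x)=D^\alpha P_x(x)$ for $x\in E$, these functions are the prescribed jets and the displayed inequality is the compatibility condition, now refined by $\phi$.

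Next I would run the classical construction: a Whitney decomposition of $U\setminus E$ into dyadic cubes $Q_i$ with $\mathrm{diam}(Q_i)\approx d(Q_i,E)=:\ell_i$, a subordinate smooth partition of unity $\varphi_i$ with $|D^\beta\varphi_i|\le C\,\ell_i^{-|\beta|}$, and a nearest point $x_i\in E$ for each $i$, and set $F=\sum_i\varphi_i P_{x_i}$ on $U\setminus E$ and $F=f_0$ on $E$. That $F=f$ almost everywhere on $E$ follows from the Lebesgue differentiation theorem together with $\phi(r)\to0$ as $r\to0$ (a consequence of $\underline{b}(\phi)>n\ge0$): at every Lebesgue point $x\in E$ one has $f(x)=\lim_{r\to0}\mathcal{L}(B(x,r))^{-1}\int_{B(x,r)}f=P_x(x)=f_0(x)$. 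To obtain $F\in C^n(U)$ with $D^\alpha F=f_\alpha$ on $E$ for $|\alpha|\le n$, I would prove the interior estimate
\[
|D^\gamma F(z)|\le C\,\phi(\rho)\,\rho^{-|\gamma|},\qquad z\in U\setminus E,\ \rho=d(z,E),
\]
for every multi-index $\gamma$: expanding $D^\gamma(F-P_{x_0})(z)$ by Leibniz (with $x_0\in E$ nearest $z$) and using $\sum_i\varphi_i\equiv1$ together with the compatibility inequality bounds each term, while $D^\gamma P_{x_0}=0$ once $|\gamma|\ge\underline{b}(\phi)$. Continuity of the derivatives up to $E$ and the identity $D^\alpha F=f_\alpha$ there then follow exactly as in the classical proof, the required decay $\phi(\rho)\rho^{-|\alpha|}\to0$ for $|\alpha|\le n$ being guaranteed by $n<\underline{b}(\phi)$.

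Finally, for the finite-difference bound, the role of the extra hypothesis $\overline{b}(\phi)<m$ is that $\Delta^{\,m-n}_h$ annihilates each $D^\alpha P_{x_0}$ with $|\alpha|=n$, since such a polynomial has degree $<\underline{b}(\phi)-n\le\overline{b}(\phi)-n<m-n$. Fixing $x\in U$ and $h$ with $[x,x+(m-n)h]\subseteq U$, writing $\delta=|h|$, $\rho=d(x,E)$ and choosing $x_0\in E$ nearest $x$, I would split into two regimes. If $\rho\lesssim\delta$, I replace $D^\alpha F$ by $D^\alpha(F-P_{x_0})$ (legitimate by the annihilation) so that $|\Delta^{\,m-n}_h D^\alpha F(x)|$ is controlled by finitely many values $|D^\alpha(F-P_{x_0})(x+jh)|$ at points lying within $\lesssim\delta$ of $x_0$; each is $\le C\,\phi(\delta)\,\delta^{-n}$ after using $n<\underline{b}(\phi)$ to absorb all scales $t\le\delta$ through $\phi(t)\,t^{-n}\le C\,\phi(\delta)\,\delta^{-n}$. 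If instead $\delta\ll\rho$, the whole segment stays far from $E$, where $F$ is smooth, so
\[
|\Delta^{\,m-n}_h D^\alpha F(x)|\le C\,\delta^{m-n}\sup_{|\gamma|=m}\ \sup_{z\in[x,x+(m-n)h]}|D^\gamma F(z)|\le C\,(\delta/\rho)^{m-n}\phi(\rho)\,\rho^{-n},
\]
and the upper Boyd index closes the estimate via $\phi(\rho)/\phi(\delta)\le C\,(\rho/\delta)^{\overline{b}(\phi)+\epsilon}\le C\,(\rho/\delta)^{m}$ for small $\epsilon$, which rearranges to $C\,\phi(\delta)\,\delta^{-n}$. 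I expect the \emph{main obstacle} to be the bookkeeping at the interface between these two regimes — matching them uniformly and checking that the sums over the Whitney cubes converge — which is precisely where the strict inequalities $n<\underline{b}(\phi)\le\overline{b}(\phi)<m$ are consumed.
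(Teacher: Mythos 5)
The paper itself gives no proof of this theorem: it is imported verbatim from Loosveldt--Nicolay \cite{Loosveldt:20}, and the argument there is exactly the Whitney machinery you outline — compatibility conditions $|D^\alpha P_x(y)-D^\alpha P_y(y)|\le C M\,\phi(|x-y|)\,|x-y|^{-|\alpha|}$ obtained from the equivalence of norms on polynomials of bounded degree, a Whitney decomposition with subordinate partition of unity, interior derivative estimates, and the two-regime analysis ($d(x,E)\lesssim|h|$ versus $|h|\ll d(x,E)$) closed by the lower and upper Boyd indices respectively. Your outline is therefore correct and follows essentially the same route as the cited source; the one slip worth fixing is that the displayed interior estimate $|D^\gamma F(z)|\le C\,\phi(\rho)\,\rho^{-|\gamma|}$ cannot hold ``for every multi-index $\gamma$'' (for $|\gamma|\le n$ it would force $D^\gamma F\to 0$ at $E$) and must be stated for $D^\gamma(F-P_{x_0})$ when $|\gamma|<\underline{b}(\phi)$, as your surrounding text in fact does.
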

 
We generalize here to the spaces $T^p_\phi(x_0)$ the approach carried out for spaces $T^p_u(x_0)$ and $t^p_u(x_0)$, $u \in \mathbb{N}_0$, in \cite{Ziemer:89}. This generalization proceeds in two directions: the indices are allowed to be non-integer, and one may also consider more refined regularity spaces, with the flexibility that a weight can exhibit more nuanced behavior than a pure power function; for instance, involving iterated logarithmic terms.

\section{Useful Results}\label{sec3}

The following Lemma is given in \cite{Ziemer:89}.
\begin{lemma}\label{lemme avec fonction test conv poly est le poly}
Let $n \in \mathbb{N}_0$. There exists a function $\varphi \in \mathcal{D}(\mathbb{R}^d)$ with support contained in $\overline{B(0,1)}$ such that for every polynomial $P$ of degree less than or equal to $n$ and for every $\varepsilon > 0$, we have $\varphi_\varepsilon \ast P = P$, where $\varphi_\varepsilon = \varepsilon^{-d} \varphi\left(\frac{\cdot}{\varepsilon}\right)$.
\end{lemma}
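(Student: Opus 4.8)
The plan is to reduce the statement to a finite set of moment conditions on $\varphi$ and then construct $\varphi$ by solving a linear system. First I would record the exact reproduction formula for convolution against a compactly supported function. Fixing $x$ and applying Taylor's formula to the polynomial $P$ (which is exact since $\deg P\le n$) gives the identity
\[
P(x-y)=\sum_{|\alpha|\le n}\frac{(-1)^{|\alpha|}}{\alpha!}\,D^\alpha P(x)\,y^\alpha.
\]
Integrating against $\varphi_\varepsilon$ and interchanging the finite sum with the integral, I obtain
\[
(\varphi_\varepsilon\ast P)(x)=\sum_{|\alpha|\le n}\frac{(-1)^{|\alpha|}}{\alpha!}\,D^\alpha P(x)\int_{\mathbb{R}^d}y^\alpha\varphi_\varepsilon(y)\,dy.
\]
Since the values $(D^\alpha P(x))_{|\alpha|\le n}$ can be prescribed arbitrarily as $P$ ranges over polynomials of degree at most $n$, the identity $\varphi_\varepsilon\ast P=P$ holds for all such $P$ if and only if the moments of $\varphi_\varepsilon$ satisfy $\int\varphi_\varepsilon=1$ and $\int y^\alpha\varphi_\varepsilon(y)\,dy=0$ for $1\le|\alpha|\le n$.

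Second, I would dispose of the dependence on $\varepsilon$ by a scaling computation. The change of variables $y=\varepsilon z$ yields
\[
\int_{\mathbb{R}^d}y^\alpha\varphi_\varepsilon(y)\,dy=\varepsilon^{|\alpha|}\int_{\mathbb{R}^d}z^\alpha\varphi(z)\,dz,
\]
so the moment of order $\alpha$ of $\varphi_\varepsilon$ equals $\varepsilon^{|\alpha|}$ times that of $\varphi$. In particular, if $\varphi$ itself satisfies $\int\varphi=1$ and $\int z^\alpha\varphi(z)\,dz=0$ for $1\le|\alpha|\le n$, then the same conditions hold for $\varphi_\varepsilon$ for \emph{every} $\varepsilon>0$ simultaneously, and the reproduction identity of the previous step applies uniformly in $\varepsilon$.

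It therefore remains to produce a single $\varphi\in\mathcal{D}(\mathbb{R}^d)$, supported in $\overline{B(0,1)}$, with these prescribed moments. I would look for $\varphi$ of the form $\varphi=Q\,\eta$, where $\eta\in\mathcal{D}(\mathbb{R}^d)$ is a fixed nonnegative bump function supported in $\overline{B(0,1)}$ and strictly positive on a nonempty open set, and $Q(x)=\sum_{|\beta|\le n}c_\beta x^\beta$ is a polynomial of degree at most $n$ whose coefficients are to be determined. The moment conditions then become
\[
\sum_{|\beta|\le n}c_\beta\int_{\mathbb{R}^d}x^{\alpha+\beta}\eta(x)\,dx=\delta_{\alpha,0},\qquad |\alpha|\le n,
\]
a square linear system whose matrix is the Gram matrix of the monomials $\{x^\beta:|\beta|\le n\}$ for the inner product $\langle f,g\rangle=\int fg\,\eta$. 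Since $\eta\ge 0$ and $\eta>0$ on an open set, this inner product is positive definite on polynomials of degree at most $n$, because a polynomial vanishing on a nonempty open set is identically zero; hence the Gram matrix is invertible and the system has a unique solution. The resulting $\varphi=Q\eta$ is smooth, supported in $\overline{B(0,1)}$, and has exactly the required moments. The only delicate point is this positive-definiteness of the weighted inner product, which is the crux of the construction; everything else is routine Taylor expansion and scaling.
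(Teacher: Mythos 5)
Your proof is correct and complete: the reduction to the moment conditions $\int\varphi=1$, $\int z^\alpha\varphi(z)\,dz=0$ for $1\le|\alpha|\le n$ via exact Taylor expansion, the scaling argument showing these conditions are inherited by every $\varphi_\varepsilon$, and the construction of $\varphi=Q\eta$ through the invertibility of the Gram matrix of monomials for the weight $\eta$ are all sound. The paper itself gives no proof of this lemma (it is simply quoted from Ziemer's book), and your argument is the standard one used there, so there is nothing to flag.
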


\begin{lemma}\label{lemme 1 Rademacher}
Let $E \subseteq \mathbb{R}^d$ be a measurable set, $p \in [1,\infty]$, and $\phi \in \mathcal{B}$ such that $\underline{b}(\phi) > -d/p$. If $f \in T^p_\phi(x_0)$, then the function
\[
\xi_f : E \rightarrow [0,\infty) : x \mapsto \|f\|_{T^p_\phi(x)}
\]
is measurable.
\end{lemma}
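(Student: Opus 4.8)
The plan is to decompose $\xi_f$ according to the three summands in the definition of $\|f\|_{T^p_\phi(x)}$ and establish measurability of each. The term $\|f\|_{L^p(\mathbb{R}^d)}$ is constant in $x$, hence harmless. The genuinely $x$-dependent objects are the coefficients $c_\alpha(x):=D^\alpha P_x(x)$ of the optimal polynomial $P_x$ attached to $f$ at $x$, and the seminorm $|f|_{T^p_\phi(x)}$. Everything thus reduces to showing that (i) each $x\mapsto c_\alpha(x)$ is measurable and (ii) for fixed $r>0$, the map $x\mapsto\|f-P_x\|_{L^p(B(x,r))}$ is measurable.

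For (i) I would recover the coefficients by mollification. Let $n$ be the largest integer with $n<\underline{b}(\phi)$, let $\varphi$ be the test function furnished by Lemma \ref{lemme avec fonction test conv poly est le poly} for this $n$, and set $\varphi_r=r^{-d}\varphi(\cdot/r)$. Since $\deg P_x\le n$, that lemma gives $\varphi_r\ast P_x=P_x$, whence $D^\alpha(\varphi_r\ast f)(x)=c_\alpha(x)+D^\alpha\big(\varphi_r\ast(f-P_x)\big)(x)$. Writing $D^\alpha(\varphi_r\ast g)(x)=\int(D^\alpha\varphi_r)(x-y)\,g(y)\,dy$, a change of variables together with H\"older's inequality and the defining estimate $\|f-P_x\|_{L^p(B(x,r))}\le C\phi(r)\,r^{d/p}$ bounds the error term by $C\,r^{-|\alpha|}\phi(r)$. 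Because $|\alpha|<\underline{b}(\phi)$, the standard Boyd-index decay $\phi(r)\le C\,r^{\underline{b}(\phi)-\epsilon}$ for small $r$ (with $\epsilon$ chosen so that $|\alpha|<\underline{b}(\phi)-\epsilon$) forces this error to vanish as $r\to 0$. Hence $c_\alpha(x)=\lim_{k\to\infty}D^\alpha(\varphi_{1/k}\ast f)(x)$; each $\varphi_{1/k}\ast f$ is of class $C^\infty$, so $x\mapsto D^\alpha(\varphi_{1/k}\ast f)(x)$ is continuous, and the pointwise limit of these continuous functions is measurable. Consequently the finite sum $\sum_{|\alpha|<\underline{b}(\phi)}|c_\alpha(x)|/|\alpha|!$ is measurable.

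For (ii) I would first reduce the supremum over $r>0$ to a countable one. For fixed $x$ the map $r\mapsto\|f-P_x\|_{L^p(B(x,r))}$ is continuous, since the balls increase and $|f-P_x|^p\,dy$ is an absolutely continuous measure that is finite on every ball (the case $p=\infty$ follows by writing the essential supremum as a limit of normalized $L^q$ norms); as $\phi$ and $r^{-d/p}$ are also continuous, one gets $|f|_{T^p_\phi(x)}=\sup_{r\in\mathbb{Q}_{>0}}\phi(r)^{-1}r^{-d/p}\|f-P_x\|_{L^p(B(x,r))}$. It then suffices to show each summand is measurable in $x$. Since $P_x(y)=\sum_\alpha\frac{c_\alpha(x)}{|\alpha|!}(y-x)^\alpha$ with $c_\alpha$ measurable and $(y-x)^\alpha$ continuous, the map $(x,y)\mapsto|f(y)-P_x(y)|^p\,\mathbf{1}_{\{|x-y|<r\}}$ is jointly measurable; Tonelli's theorem then makes $x\mapsto\int_{B(x,r)}|f-P_x|^p\,dy$ measurable, and taking the $1/p$-th power preserves measurability. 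A countable supremum of measurable functions being measurable, $|f|_{T^p_\phi(x)}$ is measurable, and $\xi_f$, as the sum of the three measurable terms, is measurable.

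The main obstacle is step (i): extracting the polynomial coefficients as genuine, measurable functions of the base point. Mollification supplies an explicit formula whose dependence on $x$ is transparently continuous for each fixed scale, but one must control the convergence well enough to identify the limit with $c_\alpha(x)$, and this is precisely where the Boyd-index decay of $\phi$ and the polynomial-reproducing property of $\varphi$ from Lemma \ref{lemme avec fonction test conv poly est le poly} are indispensable. Once the coefficients are known to be measurable, the reduction of the uncountable supremum to rational radii and the Tonelli argument in step (ii) are routine.
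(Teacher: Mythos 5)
Your proof is correct and follows essentially the same route as the paper: the key step in both is to recover the coefficients $D^\alpha P_x(x)$ as pointwise limits of the continuous functions $D^\alpha(\varphi_{1/k}\ast f)(x)$, using the polynomial-reproducing mollifier of Lemma \ref{lemme avec fonction test conv poly est le poly} together with the Boyd-index decay of $\phi$ to kill the error term. The only difference is that you also spell out the measurability of the seminorm $|f|_{T^p_\phi(x)}$ via rational radii and a Tonelli argument, a routine step the paper compresses into the assertion that it ``suffices'' to prove measurability of the coefficient maps.
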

\begin{proof}
Let $x \in E$. We denote by $P_x$ the polynomial associated with $f$ at $x$, of degree strictly less than $\underline{b}(\phi)$.
It therefore suffices to show that for every multi-index $\alpha$ with $|\alpha| < \underline{b}(\phi)$,
\[
x \in E \mapsto D^\alpha P_x(x)
\]
is measurable.  
If $\underline{b}(\phi) \leq 0$, the claim is obvious since $P_x$ is the zero polynomial for every $x \in E$. We may thus assume that $\underline{b}(\phi) > 0$.
\smallskip
\\Let $\varphi$ be the function from Lemma \ref{lemme avec fonction test conv poly est le poly} for $n=\lfloor\underline{b}(\phi)\rfloor$. \smallskip
We define, for every $x \in E$,
\[
R_x = f - P_x.
\]
For $\varepsilon > 0$ and fixed $x \in E$, we have
\begin{align*}
D^\alpha \left(\varphi_\varepsilon \ast f(x)\right)
&= D^\alpha \left(\varphi_\varepsilon \ast P_x(x)\right) + D^\alpha \left(\varphi_\varepsilon \ast R_x(x)\right) \\
&= D^\alpha P_x(x) + \int_{\mathbb{R}^d} \varepsilon^{-d - |\alpha|} (D^\alpha \varphi)\left(\frac{y}{\varepsilon}\right) R_x(x - y) \, dy.
\end{align*}
But, since $|\alpha|<\underline{b}(\phi)$, using properties of Boyd functions, we get 
\begin{align*}
\left|\int_{\mathbb{R}^d}\varepsilon^{-d-|\alpha|}\,(D^\alpha \varphi)\left(\frac{y}{\varepsilon}\right)\,R_x(x-y)\,dy\right|
&\leq C_\varphi\,\varepsilon^{-d-|\alpha|}\,\int_{B(0,\varepsilon)}|R_x(x-y)|\,dy\\
&=C_\varphi\,\varepsilon^{-|\alpha|}\,\varepsilon^{-d}\,\|f-P_x\|_{L^1(B(x,\varepsilon))}\\
&\leq C_\varphi\,\varepsilon^{-|\alpha|}\,C_d^{1-1/p} \,\varepsilon^{-d/p}\,\|f-P_x\|_{L^p(B(x,\varepsilon))}\\
&\leq C'\,\varepsilon^{-|\alpha|}\,\phi(\varepsilon)\\
&\leq C''\,\varepsilon^{-|\alpha|}\,\varepsilon^{\underline{b}(\phi)-((\underline{b}(\phi)-|\alpha|)/2)}\\
&= C'' \varepsilon^{(\underline{b}(\phi)-|\alpha|)/2)}\underset{\varepsilon\rightarrow 0^+}{\longrightarrow}0.
\end{align*}
Therefore, for all $x\in E$, $D^\alpha\left(\varphi_\varepsilon \ast f(x)\right)$ tends to $D^\alpha P_x(x)$, hence the conclusion.
\end{proof}

\medskip
\noindent
The following lemma is technically involved but plays a key role in the sequel. 
Its proof relies on a localization argument combined with covering techniques. 
Starting from the uniform control of the local $L^p$ averages of $f$ by $\phi(r)$, 
one reduces to a bounded setting where $f$ has compact support and identifies 
a large subset on which $f$ vanishes almost everywhere. Covering arguments 
then yield suitable integrability estimates for a singular kernel involving 
$\phi$, leading to the desired $o(\phi(r))$ decay for almost every $x\in E$ 
as $r\to0^+$.

\begin{lemma}\label{lemme 2 Rademacher}
Let $p\in [1,\infty)$, $\phi\in\mathcal{B}$ such that $\underline{b}(\phi)>0$ and $f\in L^p(\mathbb{R}^d)$, if 
\[r^{-d/p}\,\|f\|_{L^p(B(x,r))}\leq C\,\phi(r)\quad\forall r>0\]
for all $x$ in a measurable set $E$ of $\mathbb{R}^d$, then for almost all $x\in E$,
\[r^{-d/p}\|f\|_{L^p(B(x,r))}= o(\phi(r))\text{\emph{ when }} r\rightarrow 0^+.\]
\end{lemma}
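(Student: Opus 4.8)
The plan is to recast the statement in terms of the nonnegative integrable function $g:=|f|^p\in L^1(\mathbb{R}^d)$. Writing $\psi:=\phi^p$ and using $\mathcal{L}(B(x,r))=c_d\,r^d$, the hypothesis becomes $\int_{B(x,r)}g\,dy\le C'\,\psi(r)\,r^d$ for all $r>0$ and all $x\in E$, while the conclusion to be proved is that $\int_{B(x,r)}g\,dy=o\big(\psi(r)\,r^d\big)$ as $r\to 0^+$ for almost every $x\in E$. First I would reduce to the case where $E$ is closed: since $\phi$ (hence $\psi$) is continuous and $B(x,r)\subseteq B(x',r+|x-x'|)$, the displayed bound passes from $E$ to $\overline{E}$, and proving the conclusion a.e.\ on $\overline{E}$ yields it a.e.\ on $E$. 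Next, because $\underline{b}(\phi)>0$ forces $\phi(r)\to0$, hence $\psi(r)\to0$, as $r\to0^+$, the hypothesis together with the Lebesgue differentiation theorem gives $g(x)=\lim_{r\to0^+}\mathcal{L}(B(x,r))^{-1}\int_{B(x,r)}g=0$ for a.e.\ $x\in E$; thus $g=0$ a.e.\ on $E$. It therefore suffices to establish the conclusion at every Lebesgue density point $x_0$ of $E$, since almost every point of $E$ is one. For such $x_0$, the vanishing of $g$ on $E$ gives $\int_{B(x_0,r)}g=\int_{B(x_0,r)\setminus E}g$, so everything reduces to estimating the mass of $g$ on $B(x_0,r)\setminus E$.

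The crux is to exploit the hypothesis, which controls only balls centred on $E$, by a Whitney decomposition of the open set $\mathbb{R}^d\setminus E$ into dyadic cubes $Q$ with side length $\ell(Q)\approx d(Q,E)=:d_Q$. Each $Q$ meeting $B(x_0,r)$ satisfies $d_Q\lesssim r$ (as $x_0\in E$), hence lies in a ball $B(y_Q,C\,d_Q)$ with $y_Q\in E$ a nearest point, and the hypothesis yields $\int_Q g\le\int_{B(y_Q,C d_Q)}g\le C''\,\psi(d_Q)\,d_Q^{\,d}$. Grouping the cubes by dyadic scale $d_Q\approx2^{-k}$ and setting $A_k:=\{z\in B(x_0,Cr):d(z,E)\approx2^{-k}\}$, the disjointness of the Whitney cubes (whose number at scale $k$ is $\lesssim\mathcal{L}(A_k)\,2^{kd}$) gives, after summation over $k\ge k_0\approx\log_2(1/r)$,
\[
\int_{B(x_0,r)\setminus E}g \;\lesssim\; \sum_{k\ge k_0}\mathcal{L}(A_k)\,\psi(2^{-k}).
\]

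Here the strictly positive lower Boyd index closes the argument. Fixing $\epsilon>0$ with $\beta:=p\big(\underline{b}(\phi)-\epsilon\big)>0$, the definition of $\underline{b}(\phi)$ gives $\overline{\phi}(t)\le C_\epsilon\,t^{\underline{b}(\phi)-\epsilon}$ for $t\in(0,1]$, whence $\psi(2^{-k})\le C\,(2^{-k}/r)^{\beta}\,\psi(r)$ for $2^{-k}\lesssim r$. Substituting and using $(2^{-k}/r)^\beta\le1$,
\[
\sum_{k\ge k_0}\mathcal{L}(A_k)\,\psi(2^{-k}) \;\le\; C\,\psi(r)\sum_{k\ge k_0}\mathcal{L}(A_k) \;\le\; C\,\psi(r)\,\mathcal{L}\big(B(x_0,Cr)\setminus E\big).
\]
Since $x_0$ is a density point of $E$, $\mathcal{L}\big(B(x_0,Cr)\setminus E\big)=o(r^d)$, so $\int_{B(x_0,r)}g=o\big(\psi(r)\,r^d\big)$, which is exactly the desired conclusion $r^{-d/p}\|f\|_{L^p(B(x_0,r))}=o(\phi(r))$. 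I expect the main obstacle to be precisely this Whitney estimate and its summation: one must upgrade the big-$O$ hypothesis, valid only on $E$, to genuine little-$o$ decay off $E$, and it is the combination of the positive lower Boyd index (which makes the scalewise factors $(2^{-k}/r)^\beta$ summable against the density-controlled total measure) with the Lebesgue density of $E$ that turns the uniform upper bound into the required $o(\psi(r))$ behaviour. Keeping the geometric bookkeeping honest—the overlaps of the enlarged balls $B(y_Q,C d_Q)$ and the behaviour of $\overline{\phi}$ across the borderline scale $2^{-k}\approx r$—is the delicate part.
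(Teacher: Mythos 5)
Your proof is correct, and it reaches the conclusion by a genuinely different mechanism from the paper's, although the two share the same geometric core. Both arguments reduce to a closed set (you by passing to $\overline{E}$ using the continuity of $\phi$, the paper by extracting a closed $A\subseteq E$ with $\mathcal{L}(E\setminus A)<\epsilon$), both use the Lebesgue differentiation theorem together with $\phi(r)\to 0$ to conclude $f=0$ a.e.\ on that set, and both then decompose the complement into pieces whose diameter is comparable to their distance to the set, so that the hypothesis can be applied at nearby points of $E$ (you via a Whitney cube decomposition, the paper via a $5r$-covering of $U\setminus A$ by balls $B(s,5h(s))$ with $h(s)=\tfrac{1}{10}d(s,A)$). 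Where you diverge is in how the uniform bound is upgraded to little-$o$: the paper shows by Fubini that the generalized Riesz potential $\int |f(y)|^p\,|x-y|^{-d}\,\phi(|x-y|)^{-p}\,dy$ is finite for almost every $x$, and then lets the tail of a convergent integral vanish; you instead estimate $\int_{B(x_0,r)\setminus E}|f|^p$ directly, scale by scale, and extract the $o\bigl(\phi(r)^p r^d\bigr)$ from the Lebesgue density of $E$ at $x_0$ via $\mathcal{L}\bigl(B(x_0,Cr)\setminus E\bigr)=o(r^d)$. In both proofs the hypothesis $\underline{b}(\phi)>0$ is used twice, once to force $\phi(r)\to 0$ and once to bound $\overline{\phi}$ on $(0,1]$ --- in your case so that $\psi(2^{-k})\lesssim\psi(r)$ for $2^{-k}\lesssim r$, in the paper's so that $\int_{\rho}^{\infty}r^{-1}\phi(r)^{-1}\,dr\lesssim\phi(\rho)^{-1}$. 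Your version is somewhat more self-contained: it avoids the reduction to bounded $E$ and compactly supported $f$ and the double-integral step, and it makes the role of density points explicit; the paper's potential-theoretic formulation is the classical one and is what connects to the Bessel-capacity refinements cited around Theorem \ref{Thm 2 ziemer sobolev}. One small remark: the overlap of the enlarged balls $B(y_Q,C\,d_Q)$, which you flag as delicate, never actually matters, since your counting rests on the disjointness of the Whitney cubes themselves and on the bounded overlap of the distance-annuli $A_k$.
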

\begin{proof}
We may suppose that $E$ is bounded and that $f$ is with compact support. Since $\underline{b}(\phi)>0$, there exists a constant $C_\star>0$ such that 
\[\overline{\phi}(r)\leq C_\star\, r^{\underline{b}(\phi)-\underline{b}(\phi)/2}\leq C_\star\quad\forall r\in(0,1].\]
Let $\varepsilon >0$, we need to prove that for almost all $x\in E$, 
\begin{equation}\label{equa 1 lemme 2 rad}
r^{-d/p}\|f\|_{L^p(B(x,r))}<\varepsilon\,\phi(r)
\end{equation}
for sufficiently small $r$.
\\Let $A$ be a closed subset of $E$ such that
\[\mathcal{L}\,(E\backslash A)<\epsilon.\]
It is sufficient to prove the result for almost all $x\in A$. For $x\in A$, one has
\[\lim_{r\rightarrow 0^+}r^{-d}\,\|f\|_{L^1(B(x,r))}\leq C_d^{1-1/p}\lim_{r\rightarrow 0^+}r^{-d/p}\,\|f\|_{L^p(B(x,r))}\leq C'\,\lim_{r\rightarrow 0^+}\phi(r)=0.\]
Therefore, since almost all points are Lebesgue points, $f=0$ almost everywhere on $A$.
\smallskip\\We define the open set $U$ by
\[U=\{x\in\mathbb{R}^d:d(x,A)<1\}\]
and for all $x\in U\backslash A$, we set
\[h(x)=\frac{1}{10}\,d(x,A).\]
Using classical covering Theorems (see for example Theorem 1.3.1 of \cite{Ziemer:89}), there exists a countable set $S\subseteq U\backslash A$ such that the family
\[\left\{\overline{B(s,h(s))}:s\in S\right\}\]
is disjoint and 
\[\bigcup_{s\in S} \overline{B(s,5h(s))}\supseteq U\backslash A.\]
We get
\begin{align*}
\int_A\int_U\frac{|f(y)|}{|x-y|^{d}\,\phi(|x-y|)}dy\,dx&\leq \int_A\int_{U\backslash A}\frac{|f(y)|}{|x-y|^{d}\,\phi(|x-y|)}dy\,dx\\
&\leq \int_A\sum_{s\in S}\int_{B(s,5h(s))}\frac{|f(y)|}{|x-y|^{d}\,\phi(|x-y|)}dy\,dx\\
&= \sum_{s\in S}\int_{B(s,5h(s))}|f(y)|\int_A\frac{dx}{|x-y|^{d}\,\phi(|x-y|)}dy.
\end{align*}
Since $A$ is closed, for all $s\in S$,  there exists $x_s\in A$ such that
\[|s-x_s|=d(s,A)=10h(s),\]
and
\[B(s,5h(s))\subseteq B(x_s,|s-x_s|+5h(s))=B(x_s,15 h(s)).\]
Consequently,
\begin{align*}
(5h(s))^{-d}\,\|f\|_{L^1(B(s,5h(s)))}&\leq C_d^{1-1/p}\,(5h(s))^{-d/p}\,\|f\|_{L^p(B(s,5h(s)))}\\&\leq C_d^{1-1/p}\,(5h(s))^{-d/p}\,\|f\|_{L^p(B(x_s,15h(s)))}\\
&\leq C' \phi(15h(s))\\
&\leq C''\phi(5h(s)).
\end{align*}
Remark that if $x\in A$ and $y\in B(s,5h(s))$, then
\[|x-y|\geq  d(s,A)-5h(s)=5h(s).\]
Thus, for $y\in B(s,5h(s))$, we get 
\begin{align*}
\int_A \frac{dx}{|x-y|^{d}\,\phi(|x-y|)}
&\leq C_1\int_{|5h(s)|}^{+\infty}\frac{r^{-1}}{\phi(r)}dr\\
&\leq C_1\,C_\star \,(5h(s))^{\underline{b}(\phi)/2}\int_{|5h(s)|}^{+\infty} \frac{r^{-1}}{r^{\underline{b}(\phi)/2}\,\phi(5h(s))}\,dr\\
&\leq C_2\,\phi(5h(s))^{-1}.
\end{align*}
We obtain
\begin{align*}
\int_A \int_{B(s,5h(s))}\frac{|f(y)|}{|x-y|^{d}\,\phi(|x-y|)}dy\,dx &\leq C_2\,(\phi(5h(s))^{-1}\int_{B(s,5h(s))}|f(y)|\,dy\\
&\leq C_2\,(\phi(5h(s))^{-1}\,(5(h(s))^d\,C''\,\phi(5h(s))\\
&\leq C_3\,h(s)^{d}.
\end{align*}
Since family $\left\{\overline{B(s,h(s))}:s\in S\right\}$ is disjointed,
\[\int_A\int_U\frac{|f(y)|}{|x-y|^{d}\,\phi(|x-y|)}dy\,dx\leq C_3\sum_{s\in S}h(s)^{d}\leq C_3\sum_{s\in S}h(s)<\infty,\]
so that
\[\int_U\frac{|f(y)|}{|x-y|^{d}\,\phi(|x-y|)}dy<\infty\]
for almost all $x\in A$. Since $f$ has compact support,
\[\int_{\mathbb{R}^d\backslash U}\frac{|f(y)|}{|x-y|^{d}\,\phi(|x-y|)}dy<\infty\]
for all $x\in A$ and thus
\[\int_{\mathbb{R}^d}\frac{|f(y)|}{|x-y|^{d}\,\phi(|x-y|)}dy<\infty\]
for almost all $x\in A$.
\\We observe that, to establish the above result, we have merely used the fact that
\[r^{-d}\,\|f\|_{L^1(B(x,r))}\leq C_d^{1-1/p}\,r^{-d/p}\,\|f\|_{L^p(B(x,r))}\leq C\phi(r).\]
Therefore, if $g=|f|^p$, the assumption becomes
\[r^{-d}\,\|g\|_{L^1(B(x,r))}\leq C\phi(r)^p\]
for all $x\in E$. 
\\But, if $\phi\in \mathcal{B}$, then $\phi^p\in  \mathcal{B}$ et $\underline{b}(\phi^p)=p\,\underline{b}(\phi)>0$.
\\Thus, by applying once again the argument used above, we obtain that
\[\int_{\mathbb{R}^d}\frac{|f(y)|^p}{|x-y|^{d}\,\phi^p(|x-y|)}\,dy=\int_{\mathbb{R}^d}\frac{|g(y)|}{|x-y|^{d}\,\phi^p(|x-y|)}\,dy<+\infty\]
for almost all $x\in E$.
\\For such $x$ and for all sufficiently small $r$, we have
\[\int_{B(x,r)}\frac{|f(y)|^p}{|x-y|^{d}\,\phi^p(|x-y|)}<\varepsilon^p\,C_\star^{-p},\]
and therefore, for all sufficiently small $r$,
\begin{align*}
r^{-d/p}\|f\|_{L^p(B(x,r))}&=\phi(r)\left(\int_{B(x,r)}\frac{|f(y)|^p}{r^{d}\,\phi^p(r)}\,dy\right)^{1/p}\\
&\leq \phi(r)\left(\int_{B(x,r)}\frac{|f(y)|^p}{r^{d}\,\phi^p(|x-y|)}\,dy\,C_\star^p\right)^{1/p}\\
& < \varepsilon\,\phi(r).
\end{align*}
\end{proof}
To generalize Rademacher's theorem, we will also make use of Lusin's Theorem, which establishes a connection between measurability and continuity for functions defined on locally compact Hausdorff spaces equipped with a Radon measure (\cite{Luzin:12}). We state a weaker version of this result for spaces $E$ contained in a Euclidean space $\mathbb{R}^d$, under the additional assumption that $E$ is bounded.
\begin{theorem}\emph{(Lusin's Theorem)}
\\A real-valued function $f$ defined on a bounded measurable subset $E$ of $\mathbb{R}^d$ is measurable if and only if, for every $\epsilon>0$, there exists a closed set $E_\epsilon\subseteq E$ such that
\[\mathcal{L}(E\backslash E_\epsilon)\leq \epsilon\] and such that $f$ is continuous on $E_\epsilon$.
\end{theorem}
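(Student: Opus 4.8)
The plan is to prove the two implications separately, with the forward (\emph{if}) direction being elementary and the reverse (\emph{only if}) direction requiring the standard machinery of inner regularity, simple-function approximation, and Egorov's theorem. For the \emph{if} direction, suppose that for every $\epsilon>0$ there is a closed set $E_\epsilon\subseteq E$ with $\mathcal{L}(E\backslash E_\epsilon)\leq\epsilon$ on which $f$ is continuous. Applying this with $\epsilon=1/n$ produces closed sets $E_{1/n}$ on which $f$ is continuous, hence Borel measurable as a function on the subspace $E_{1/n}$. Setting $G=\bigcup_n E_{1/n}$, one has $\mathcal{L}(E\backslash G)\leq\inf_n\mathcal{L}(E\backslash E_{1/n})=0$, and $f$ is measurable on the countable union $G$. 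Since Lebesgue measure is complete, the values of $f$ on the null set $E\backslash G$ are irrelevant, so $f$ is measurable on $E$.

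For the \emph{only if} direction I would first treat simple functions and then pass to the general case. If $f=\sum_{k=1}^{m}c_k\,\chi_{A_k}$ with the $A_k$ disjoint measurable sets covering $E$, then by inner regularity of Lebesgue measure each $A_k$ contains a closed set $F_k$ with $\mathcal{L}(A_k\backslash F_k)$ as small as desired. Because $E$ is bounded, the $F_k$ are disjoint compact sets, hence pairwise at positive distance, so $f$ is locally constant on $\bigcup_k F_k$ and therefore continuous on this closed set, whose complement in $E$ has small measure.

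For a general measurable $f$, choose simple functions $f_n\to f$ pointwise on $E$. By the simple-function case, for each $n$ there is a closed set $F_n\subseteq E$ with $\mathcal{L}(E\backslash F_n)<\epsilon\,2^{-n-1}$ on which $f_n$ is continuous. Since $\mathcal{L}(E)<\infty$, Egorov's theorem yields a measurable set on which $f_n\to f$ uniformly, and shrinking it by inner regularity gives a closed set $F_0\subseteq E$ with $\mathcal{L}(E\backslash F_0)<\epsilon/2$ on which the convergence is uniform. Then $E_\epsilon=F_0\cap\bigcap_n F_n$ is closed, satisfies $\mathcal{L}(E\backslash E_\epsilon)\leq\mathcal{L}(E\backslash F_0)+\sum_n\mathcal{L}(E\backslash F_n)<\epsilon$, and carries a uniformly convergent sequence of functions each continuous on $E_\epsilon$; the uniform limit $f$ is therefore continuous on $E_\epsilon$.

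The main obstacle is the general \emph{only if} case: one must combine the simple-function continuity with Egorov's uniform convergence while keeping the exceptional set both closed and of controlled measure. The two places where the hypotheses on $E$ enter are precisely here, since boundedness of $E$ (hence $\mathcal{L}(E)<\infty$) is what makes Egorov's theorem applicable, and inner regularity of Lebesgue measure is what converts the measurable sets produced by Egorov and by the level-set decomposition into closed ones.
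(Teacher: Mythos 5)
The paper does not prove this statement: it is quoted as a classical result with a reference to Luzin's original note, so there is no in-paper argument to compare against. Your proof is the standard textbook derivation of Lusin's theorem (level-set decomposition for simple functions, inner regularity to pass to closed sets, Egorov to upgrade pointwise to uniform convergence, and completeness of Lebesgue measure for the converse), and it is correct. Two minor points worth making explicit if you write it up: in the forward direction, the measurability of $f$ on $G=\bigcup_n E_{1/n}$ follows because $\{x\in E_{1/n}: f(x)>a\}$ is relatively open in the closed set $E_{1/n}$, hence Borel; and in the simple-function step, the boundedness of $E$ is what guarantees the closed sets $F_k$ are compact, so that disjointness forces positive pairwise distances and $f$ is genuinely continuous (not just separately constant) on their union. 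Both of these you essentially note already, so the argument is complete.
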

We also need some preliminary results concerning Sobolev spaces. Let $\Omega$ be an open set of $\mathbb{R}^d$.
\begin{theorem}\label{Thm 1 ziemer sobolev}
If $f\in L^p(\Omega)$, then $f\in W^p_1(\Omega)$ if and only if there exists a function $F$ that coincides with $f$ almost everywhere on $\Omega$, which is absolutely continuous on almost every line segment of $\Omega$ axis-parallel and whose first-order partial derivatives belong to $L^p(\Omega)$.
\end{theorem}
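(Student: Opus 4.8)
The plan is to prove the two implications separately, the central tool in both being Fubini's theorem, which transfers global $L^p$ information into information along almost every line parallel to a coordinate axis. For the sufficiency direction, suppose $F = f$ almost everywhere is absolutely continuous on almost every axis-parallel segment of $\Omega$ and that its (almost everywhere defined) classical partial derivatives $\partial_i F$ lie in $L^p(\Omega)$. I would show that each $\partial_i F$ is the weak derivative of $f$. Fix a test function $\varphi \in \mathcal{D}(\Omega)$ and an index $i$. By Fubini, I write $\int_\Omega F\,\partial_i\varphi\,dx$ as an iterated integral, integrating first along the lines in the $x_i$ direction. On almost every such line $F$ is absolutely continuous and $\varphi$ is compactly supported, so one-dimensional integration by parts gives, with vanishing boundary terms,
\[
\int F\,\partial_i\varphi\,dx_i = -\int (\partial_i F)\,\varphi\,dx_i.
\]
Integrating over the remaining variables and applying Fubini once more yields $\int_\Omega F\,\partial_i\varphi = -\int_\Omega (\partial_i F)\,\varphi$, so $\partial_i F$ is the weak partial derivative of $f$. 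Since $f = F \in L^p(\Omega)$ and $\partial_i F \in L^p(\Omega)$ for every $i$, we conclude $f \in W^p_1(\Omega)$.

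For the necessity direction, assume $f \in W^p_1(\Omega)$ and fix a subdomain $\Omega' \Subset \Omega$. I would mollify, setting $f_\varepsilon = f * \rho_\varepsilon$ for a standard mollifier $\rho_\varepsilon$; on $\Omega'$ one has $f_\varepsilon \to f$ and $\partial_i f_\varepsilon \to \partial_i f$ in $L^p(\Omega')$ as $\varepsilon \to 0^+$, where $\partial_i f$ denotes the weak derivative. Each $f_\varepsilon$ is smooth, hence satisfies the fundamental theorem of calculus along every segment,
\[
f_\varepsilon(y + te_i) - f_\varepsilon(y) = \int_0^t \partial_i f_\varepsilon(y + s e_i)\,ds.
\]
By Fubini, $L^p(\Omega')$-convergence implies that, along a suitable sequence $\varepsilon_k \to 0$, the functions $f_{\varepsilon_k}$ and $\partial_i f_{\varepsilon_k}$ converge in the one-dimensional $L^p$ norm on almost every axis-parallel line. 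Passing to the limit in the identity above, a pointwise limit $F$ satisfies $F(y+te_i) - F(y) = \int_0^t \partial_i f(y + se_i)\,ds$ on almost every such line, so $F$ is absolutely continuous there with classical derivative equal to $\partial_i f$ almost everywhere. Exhausting $\Omega$ by such subdomains and diagonalizing over the $d$ coordinate directions would then produce a single representative $F = f$ almost everywhere that is absolutely continuous on almost every axis-parallel segment, with partials $\partial_i F = \partial_i f \in L^p(\Omega)$.

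The main obstacle is precisely the construction, in the necessity direction, of one representative $F$ that works simultaneously for all $d$ directions: a priori the exceptional null set of ``bad'' lines and the convergent subsequence depend both on the direction $i$ and on the exhausting subdomain $\Omega'$. Handling this requires a common (diagonal) subsequence together with a careful verification that the direction-dependent null sets combine consistently, and that the limit built from the line-wise convergence genuinely coincides almost everywhere with the $L^p$ limit $f$, so that $F$ is a bona fide representative of $f$ rather than an unrelated absolutely-continuous-on-lines function. The sufficiency direction, by contrast, is essentially a single application of Fubini and one-dimensional integration by parts and presents no serious difficulty.
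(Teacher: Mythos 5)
This theorem is stated in the paper without proof: it is the classical ACL characterization of first-order Sobolev functions, imported from Ziemer's book (Theorem 2.1.4 there; see also Evans--Gariepy), so there is no in-paper argument to compare against. Your outline reproduces the standard proof of that result and is sound in its overall structure: sufficiency is indeed Fubini plus one-dimensional integration by parts for absolutely continuous functions (with the boundary terms killed by the compact support of the test function), and necessity is mollification plus a Fubini-type selection of good lines.

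The one genuinely delicate point is the one you flag yourself, and it deserves to be resolved rather than merely named. The standard device is to exhaust $\Omega$ by countably many open sets compactly contained in $\Omega$ and to extract a single sequence $\varepsilon_k \to 0$ such that $\sum_k \|f_{\varepsilon_{k+1}} - f_{\varepsilon_k}\|_{W^p_1}$ is finite on each set of the exhaustion. By Fubini and monotone convergence, for each of the $d$ directions the corresponding sum of one-dimensional $L^1$ norms along lines is then finite for almost every axis-parallel line; since only countably many directions and countably many subdomains are involved, the union of the exceptional families of lines is still negligible. On every remaining line the $f_{\varepsilon_k}$ converge uniformly on compact subsegments to an absolutely continuous function, and one takes $F(x) = \lim_k f_{\varepsilon_k}(x)$ wherever the limit exists; this set has full measure and meets almost every good line in a set of full one-dimensional measure, which is what guarantees that $F$ is simultaneously a representative of $f$ and absolutely continuous on almost every segment, with classical partials equal to the weak ones. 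Two smaller points worth making explicit in the sufficiency direction: the almost-everywhere defined classical partial derivatives of $F$ must be identified with the given $L^p$ functions as measurable functions on $\Omega$ before Fubini can be applied, and the integrability needed for Fubini follows from $F \in L^p(\Omega) \subseteq L^1_{\mathrm{loc}}(\Omega)$ together with the compact support of the test function. With these additions your sketch is a complete and correct proof.
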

The following result is also proved in \cite{Ziemer:89} (see Theorems 3.4.1 and 3.4.2 of \cite{Ziemer:89}) in a slightly more general setting where the null measure set in question is actually of zero Bessel capacity.
\begin{theorem}\label{Thm 2 ziemer sobolev}
If $p>1$ and $1\leq m_1\leq m_2$ verify $(m_2-m_1)\,p<d$ and $f\in W^p_{m_2}(\mathbb{R}^d)$, then there exists a polynomial $Q$ of degree at most $m_1$ such that
\[r^{-m_1}\,r^{-d/p}\,\left\|f-Q\right\|_{L^p(B(x,r))}\underset{r\rightarrow 0^+}{\longrightarrow}0\]
for almost all $x\in\mathbb{R}^d$.
\end{theorem}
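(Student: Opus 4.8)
The plan is to build the candidate polynomial from the precise representatives of the derivatives of $f$ of order at most $m_1$, and then to bound the $L^p$ remainder by the $L^p$-oscillation of the top-order ($=m_1$) derivatives, which vanishes in the limit at almost every point. Set $k=m_2-m_1$. Since $p>1$ and $m_2\in\N$, I would first identify $W^p_{m_2}(\R^d)$ with the Bessel potential space, writing $f=G_{m_2}\ast g$ for some $g\in L^p(\R^d)$. For every multi-index $\beta$ with $|\beta|\le m_1$, the symbol $(i\xi)^\beta(1+|\xi|^2)^{-m_2/2}$ factors as $(1+|\xi|^2)^{-(m_2-|\beta|)/2}$ times a bounded order-zero multiplier, so that $D^\beta f=G_{m_2-|\beta|}\ast(Tg)$ with $T$ a Calder\'on--Zygmund operator bounded on $L^p$ (here $p>1$ is essential); thus $D^\beta f$ is a Bessel potential of order $m_2-|\beta|\ge k$ of an $L^p$ function, and in particular $D^\beta f\in L^p(\R^d)$. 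By the $L^p$ Lebesgue differentiation theorem, for almost every $x$ the limit $a_\beta(x):=\lim_{\rho\to0^+}\mathcal{L}(B(x,\rho))^{-1}\int_{B(x,\rho)}D^\beta f$ exists and the oscillation
\[
\omega_\beta(\rho):=\Big(\mathcal{L}(B(x,\rho))^{-1}\int_{B(x,\rho)}|D^\beta f(z)-a_\beta(x)|^p\,dz\Big)^{1/p}
\]
tends to $0$ as $\rho\to0^+$. I then set $Q(y)=Q_x(y)=\sum_{|\beta|\le m_1}\frac{a_\beta(x)}{\beta!}(y-x)^\beta$.

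The second step is a pointwise representation of the remainder. Using that $f\in W^p_{m_2}(\R^d)$ is absolutely continuous on almost every axis-parallel segment (Theorem \ref{Thm 1 ziemer sobolev}) together with approximation by smooth functions, the integral form of Taylor's formula holds along almost every ray issuing from almost every $x$, giving, after subtracting the order-$m_1$ part,
\[
f(y)-Q_x(y)=\sum_{|\beta|=m_1}\frac{(y-x)^\beta}{\beta!}\,m_1\int_0^1 t^{m_1-1}\big(D^\beta f(x+t(y-x))-a_\beta(x)\big)\,dt,
\]
so that $|f(y)-Q_x(y)|\le m_1\,|y-x|^{m_1}\sum_{|\beta|=m_1}\int_0^1 t^{m_1-1}\big|D^\beta f(x+t(y-x))-a_\beta(x)\big|\,dt$ for $y\in B(x,r)$.

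The third step is the $L^p$ estimate. I would raise the bound to the power $p$, integrate over $B(x,r)$, apply Minkowski's integral inequality in the variable $t$, and perform the change of variables $z=x+t(y-x)$, under which $\int_{B(x,r)}|D^\beta f(x+t(y-x))-a_\beta(x)|^p\,dy=t^{-d}\int_{B(x,tr)}|D^\beta f-a_\beta(x)|^p$. Since $m_1\ge 1$ this yields
\[
\Big(\int_{B(x,r)}|f-Q_x|^p\Big)^{1/p}\lesssim r^{m_1}\,r^{d/p}\sum_{|\beta|=m_1}\int_0^1 t^{m_1-1}\,\omega_\beta(tr)\,dt\le \frac{r^{m_1+d/p}}{m_1}\sum_{|\beta|=m_1}\sup_{0<s\le r}\omega_\beta(s),
\]
and therefore $r^{-m_1}r^{-d/p}\|f-Q\|_{L^p(B(x,r))}\to0$ as $r\to0^+$ for almost every $x$.

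Two places use the hypotheses: $p>1$ secures the Bessel representation and the $L^p$-boundedness of $g\mapsto D^\beta(G_{m_2}\ast g)$, while $(m_2-m_1)p<d$ is precisely the subcriticality making the Riesz potential $I_k$ satisfy $\|I_k(\mathbf{1}_{B(x,r)}h)\|_{L^p(B(x,r))}\lesssim r^{k}\|h\|_{L^p(B(x,r))}$ by fractional integration; this estimate is what upgrades the conclusion from "almost every $x$" to "outside a set of $B_{k,p}$-capacity zero", the sharper form recorded in \cite{Ziemer:89}. The hard part will be the rigorous justification of the integral Taylor representation for a genuinely non-smooth $f$ with the coefficients taken to be the precise ($L^p$-Lebesgue) values $a_\beta(x)$: I would establish it for smooth $f\in W^p_{m_2}(\R^d)$ with constants independent of $f$, then pass to the limit using density together with the continuity of the derivative and potential maps, verifying by Fubini that "almost every ray from almost every $x$" is compatible with the Lebesgue-point definition of the $a_\beta$.
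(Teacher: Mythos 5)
The paper does not actually prove this statement: it is imported verbatim from Ziemer (Theorems 3.4.1 and 3.4.2 of \cite{Ziemer:89}), where the exceptional set is even measured in Bessel capacity rather than Lebesgue measure, so there is no in-paper argument to compare against. Judged on its own, your proposal follows the classical route (integral Taylor remainder along rays, Minkowski's integral inequality, the dilation $z=x+t(y-x)$, and $L^p$-Lebesgue points of the derivatives), and the quantitative core is correct: after the change of variables the factor $t^{-d/p}$ cancels against $\mathcal{L}(B(x,tr))^{1/p}$, leaving $r^{m_1+d/p}\sup_{0<s\le r}\omega_\beta(s)\to 0$. Two minor points first: the Bessel-potential paragraph is superfluous for the statement as given, since $D^\beta f\in L^p$ for $|\beta|\le m_1\le m_2$ is immediate from the definition of $W^p_{m_2}$ (and, as you correctly observe, $(m_2-m_1)p<d$ plays no role in the almost-everywhere version, only in the capacity refinement); and the weight in the integral remainder should be $(1-t)^{m_1-1}$ rather than $t^{m_1-1}$, which is harmless because either weight integrates to $1/m_1$ and is bounded by $1$.

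The genuine gap is the one you flag yourself: the almost-everywhere validity of the Taylor representation with coefficients equal to the precise representatives $a_\beta(x)$ is not a technicality to be ``justified later'' --- it is essentially the content of Ziemer's Theorem 3.4.1, i.e.\ the theorem being quoted. Your sketch (establish the identity for smooth functions, then pass to the limit along mollifications) is the right idea, but as stated it does not close: convergence of $\int_0^1(1-t)^{m_1-1}D^\beta f_\varepsilon(x+t(y-x))\,dt$ for a.e.\ $y$ is delicate, and the cleaner path is to pass to the limit in the integrated inequality $\|f_\varepsilon-Q_x^\varepsilon\|_{L^p(B(x,r))}\le C r^{m_1+d/p}\sum_{|\beta|=m_1}\int_0^1(1-t)^{m_1-1}\omega_\beta^\varepsilon(tr)\,dt$, using the pointwise domination $|D^\beta f\ast\rho_\varepsilon|\le M(D^\beta f)$ with $M(D^\beta f)\in L^p_{\mathrm{loc}}$ (this is a second place where $p>1$ is genuinely used) to justify dominated convergence in $t$, together with $D^\beta f_\varepsilon(x)\to a_\beta(x)$ at the common Lebesgue points of the finitely many derivatives. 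Until that limiting argument is written out, the proposal reproduces the standard proof's skeleton but defers its only hard step.
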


We are now in a position to generalize Rademacher's Theorem. We follow the proof in \cite{Ziemer:89} since we will need similar arguments for the new results.

\begin{theorem}\label{Rademacher's Theorem Ziemer}\emph{(Rademacher's Theorem in the $L^p$ context)}
\\Let $E\subseteq \mathbb{R}^d$ measurable, $p\in(1,\infty)$ and $n\in\mathbb{N}_0$. If $f\in T^p_n(x)$ for all $x\in E$, then $f\in t^p_n(x)$ for almost all $x\in E$.
\end{theorem}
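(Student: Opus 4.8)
The plan is to run the classical scheme adapted to this framework: cut $E$ down to a closed set on which the pointwise Taylor data of $f$ depend continuously on the base point, extend that data to a globally Sobolev function by Whitney, differentiate this extension with the $L^p$-Taylor theorem, and finally transfer the conclusion back to $f$ using Lemma \ref{lemme 2 Rademacher}. First I would make the standard reductions. We may assume $E$ bounded and $n\geq 1$, the case $n=0$ being exactly the statement that almost every point is an $L^p$-Lebesgue point, which holds for any $f\in L^p_{\mathrm{loc}}$. By Lemma \ref{lemme 1 Rademacher} the map $x\mapsto\|f\|_{T^p_n(x)}$ is measurable, so writing $E=\bigcup_{j}\{x\in E:\|f\|_{T^p_n(x)}\leq j\}$ and using that a countable union of null sets is null, it suffices to treat the case $\|f\|_{T^p_n(x)}\leq M$ for all $x\in E$. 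The proof of Lemma \ref{lemme 1 Rademacher} also shows that each coefficient map $x\mapsto D^\alpha P_x(x)$, $|\alpha|\leq n-1$, is measurable on $E$; by Lusin's Theorem I may therefore fix, for every $\epsilon>0$, a closed set $A\subseteq E$ with $\mathcal{L}(E\setminus A)<\epsilon$ on which all these maps are continuous. Since $\epsilon$ is arbitrary, it is enough to prove that $f\in t^p_n(x)$ for almost every $x\in A$.

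Next I would extend the jet $(D^\alpha P_x(x))_{|\alpha|\leq n-1}$ off $A$. The uniform bound $\|f\|_{T^p_n(x)}\leq M$ on the closed set $A$ feeds Theorem \ref{Gen Whitney} with $\phi(t)=t^n$ and produces $F\in C^{n-1}(U)$ on a bounded neighbourhood $U$ of $A$, with $F=f$ almost everywhere on $A$; after multiplying by a cut-off I regard $F$ as defined on all of $\mathbb{R}^d$. The step I expect to be most delicate is the Sobolev membership of $F$: the estimate furnished by Theorem \ref{Gen Whitney} is only a second-difference (Zygmund-type) bound on the top derivatives, which by itself does not place $F$ in $W^p_n$. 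The extra input is that $n$ is an integer, so the order $n$ remainder in (\ref{equation de base espace tpphi}) forces the $(n-1)$-th order coefficients to form a \emph{Lipschitz} jet on $A$; the extension can then be taken with $D^\alpha F$ Lipschitz continuous for $|\alpha|=n-1$, and Theorem \ref{Thm 1 ziemer sobolev} applied to these derivatives yields $F\in W^p_n(\mathbb{R}^d)$. Theorem \ref{Thm 2 ziemer sobolev} with $m_1=m_2=n$ (so that the condition $(m_2-m_1)p<d$ holds automatically and $p>1$ is used) then gives, for almost every $x$, a polynomial $Q_x$ of degree at most $n$ with $r^{-d/p}\|F-Q_x\|_{L^p(B(x,r))}=o(r^n)$; that is, $F\in t^p_n(x)$ almost everywhere.

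Finally I would transfer this to $f$ through $g:=f-F\in L^p(\mathbb{R}^d)$. For almost every $x\in A$ the degree $\leq n-1$ part of $Q_x$ coincides with $P_x$, because $F\in C^{n-1}$ and $D^\alpha F(x)=D^\alpha P_x(x)$ for $x\in A$ and $|\alpha|\leq n-1$; hence $P_x-Q_x$ is homogeneous of degree $n$, and combining $f\in T^p_n(x)$ with $F\in T^p_n(x)$ via the triangle inequality gives $r^{-d/p}\|g\|_{L^p(B(x,r))}\leq C\,r^n$ for almost every $x\in A$. Since $g=0$ almost everywhere on $A$ and $\underline{b}(\phi)=n>0$ for $\phi(t)=t^n$, Lemma \ref{lemme 2 Rademacher}, applied on the full-measure subset of $A$ where this bound holds, upgrades it to $r^{-d/p}\|g\|_{L^p(B(x,r))}=o(r^n)$ for almost every $x\in A$. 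Adding this to $r^{-d/p}\|F-Q_x\|_{L^p(B(x,r))}=o(r^n)$ yields $r^{-d/p}\|f-Q_x\|_{L^p(B(x,r))}=o(r^n)$, i.e. $f\in t^p_n(x)$ for almost every $x\in A$, and letting $\epsilon\to 0$ exhausts $E$ up to a null set. The main obstacle is thus the Sobolev regularity of the extension: one must exploit the integer value of $n$ (a Lipschitz top-order jet, hence $F\in W^p_n$) rather than the borderline second-difference estimate of Theorem \ref{Gen Whitney}, and then keep careful track of the polynomials $P_x$ and $Q_x$ so that Lemma \ref{lemme 2 Rademacher} can be applied to $g$.
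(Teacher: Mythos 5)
Your proposal is correct and follows essentially the same route as the paper: Lusin plus the measurability of $x\mapsto\|f\|_{T^p_n(x)}$ to reduce to a closed set with a uniform bound, Whitney extension with Lipschitz top-order derivatives, Theorem \ref{Thm 1 ziemer sobolev} to place $F$ in $W^p_{n,\mathrm{loc}}$, Theorem \ref{Thm 2 ziemer sobolev} with $m_1=m_2=n$, and Lemma \ref{lemme 2 Rademacher} to upgrade the bound on $f-F$. Your explicit discussion of why the integer case gives a Lipschitz jet (rather than only the second-difference estimate of Theorem \ref{Gen Whitney}) makes precise a point the paper passes over quickly, but the argument is the same.
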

\begin{proof}~
\smallskip\\$\bullet$ Let us first assume that $E$ is bounded. By lemma \ref{lemme 1 Rademacher} and Lusin's Theorem, there exist compacts $E_{1/m}\subseteq E$, $m\in\mathbb{N}$, such that
\[\mathcal{L}\left(E\backslash E_{1/m}\right)\leq\frac{1}{m}\]
and such that \[\xi_f : E_{1/m} \rightarrow [0,\infty) : x\mapsto \|f\|_{T^p_\phi(x)}\] is continuous. Let $m\in\mathbb{N}$, there exists a constant $M=M(m)>0$ such that
 \[\|f\|_{T^p_n(x)}\leq M\quad\forall x\in E_{1/m}.\]
Let $x\in E_{1/m}$, we denote by $P_x$ the polynomial of degree strictly less than or equal to $n-1$ associated with $f$. Thanks to Whitney's extension Theorem \ref{Gen Whitney}, there exists an open set $U$ containing $E_{1/m}$ and a function $F\in C^{n-1}(U)$, with Lipchitz-continuous partial derivatives such that $F=f$ almost everywhere $E_{1/m}$ and such that for all $|\beta|< n$ and for almost all $x\in E_{1/m}$,
\[D^\beta F(x)=D^\beta P_x(x).\]
By Theorem \ref{Thm 1 ziemer sobolev}, $F\in W^p_{n,\text{loc}}(\mathbb{R}^d)$. Using Theorem \ref{Thm 2 ziemer sobolev} with $m_1=m_2=n$, it follows that for almost every $x\in \mathbb{R}^d$, there exists a polynomial $Q_x$ of degree at most $n$ such that
\[\,r^{-d/p}\,\|F-Q_x\|_{L^p(B(x,r))}= o(r^{n})\quad \text{when } r\rightarrow 0^+.\]
Moreover, for almost every $x\in E_{1/m}$,  
\[r^{-d/p}\,\|f-F\|_{L^p(B(x,r))}\leq C\,r^n.\]
Using Lemma \ref{lemme 2 Rademacher}, we get
\[r^{-d/p}\|f-F\|_{L^p(B(x,r))}= o(r^n)\text{ when } r\rightarrow 0^+,\]
Thus, for almost all $x\in E_{1/m}$, 
\begin{align*}
r^{-d/p}\,\|f-Q_x\|_{L^p(B(x,r))}&\leq r^{-d/p}\,\|f-F\|_{L^p(B(x,r))}+r^{-d/p}\,\|F-Q_x\|_{L^p(B(x,r))}\\
&= o(\phi(r)) + o(r^{n+1})\\&= o(\phi(r))\text{ when } r\rightarrow 0^+.
\end{align*}
We have thus proved the result for sets $E_{1/m}$, $m\in\mathbb{N}$. Since a countable union of negligible sets is negligible, we deduce the conclusion for $E$.
\smallskip\\$\bullet$ If $E$ is unbounded, it suffices to apply the previous case to the sets
\[
E^{(j)} = E \cap B(0,j), \quad j \in \mathbb{N},
\]
and to conclude again using the fact that a countable union of negligible sets is negligible.
\end{proof}

\section{Rademacher's Theorem for Calderon-Zygmund-type Spaces}\label{sec4}
We suppose that $\phi\in\mathcal{B}$ and $n\in\mathbb{N}_0$ are such that $n<\underline{b}(\phi)\leq \overline{b}(\phi)<n+1$. For those fractional indices, we can't easily adapt the proof because of Theorem \ref{Thm 2 ziemer sobolev}. The idea to get results is to reduce to the case of integer indices. Before the Theorem, let us remark that if we have some assumption on $f$ at a neighborhood of $x_0$, then $f\in t^p_{\phi,n+1}(x_0)$.
\begin{remark}
If $f \in C^{n+1}(V)$, where $V$ is an open neighborhood of $x_0$, then $$f \in t^p_{\phi,n}(x_0)\subseteq t^p_{\phi,n+1}(x_0).$$ Indeed, there exists $R > 0$ such that $\overline{B(x_0,R)} \subseteq V$. If $P$ denotes the Taylor polynomial of $f$ of order $n$ at $x_0$, then since $f$ is of class $C^{n+1}$ on $V$, there exists a constant $C > 0$ such that
\[
|f(x)-P(x)| \leq C|x-x_0|^{n+1} \quad \forall x \in B(x_0,R).
\]
Thus, for every $r \in (0,R]$, we have
\[
r^{-d/p}\|f-P\|_{L^p(B(x_0,r))} \leq C'' r^{n+1}.
\]
By assumption, there exists $\varepsilon > 0$ such that $\overline{b}(\phi) + \varepsilon < n+1$.  
It follows that
\[
\phi(r)^{-1} r^{-d/p}\|f\|_{L^p(B(x_0,r))}
\leq \phi(r)^{-1} C'' r^{n+1}
\leq \frac{C''}{C_1} r^{\,n+1-(\overline{b}(\phi)+\varepsilon)} \underset{r \to 0^+}{\longrightarrow} 0,
\]
and hence $f \in t^p_{\phi,n}(x_0)$.
\end{remark}

Note that, 
\[t^p_{\overline{b}(\phi)}(x_0)\subseteq t^p_{\phi,n}(x_0)\subseteq t^p_{\phi,n+1}(x_0)\subseteq t^p_{\underline{b}(\phi)}(x_0).\]
Since the second inclusion is often strict, the best general Rademacher's Theorem we can get is the following.
\begin{theorem}\label{Rademacher's Theorem for Calderon-Zygmund-type Spaces}\emph{(Rademacher's Theorem for Calderon-Zygmund-type Spaces)}
\\Let $E\subseteq \mathbb{R}^d$ measurable, $p\in(1,\infty)$, $\phi\in\mathcal{B}$ and $n\in\mathbb{N}_0$ such that $n<\underline{b}(\phi)\leq \overline{b}(\phi)<n+1$. If $f\in T^p_\phi(x)$ for all $x\in E$, then $f\in t^p_{\phi,n+1}(x)$ for almost all $x\in E$.
\end{theorem}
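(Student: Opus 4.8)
The plan is to follow the template of the integer-index proof of Theorem \ref{Rademacher's Theorem Ziemer}, replacing the power weight by $\phi$ wherever possible and isolating the one step that genuinely needs new input. As there, I would first reduce to the case where $E$ is bounded, handling the general case afterwards through the exhaustion $E\cap B(0,j)$. Using the measurability of $x\mapsto\|f\|_{T^p_\phi(x)}$ from Lemma \ref{lemme 1 Rademacher} together with Lusin's Theorem, I would extract compact sets $E_{1/m}\subseteq E$ with $\mathcal{L}(E\setminus E_{1/m})\le 1/m$ on which $x\mapsto\|f\|_{T^p_\phi(x)}$ is continuous, hence bounded by some $M=M(m)$. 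Since $n<\underline{b}(\phi)\le\overline{b}(\phi)<n+1$, the generalized Whitney Theorem \ref{Gen Whitney} applies with $m=n+1$ and produces an open set $U\supseteq E_{1/m}$ and a function $F\in C^n(U)$ with $F=f$ almost everywhere on $E_{1/m}$, with $D^\beta F(x)=D^\beta P_x(x)$ for $|\beta|\le n$ at almost every $x\in E_{1/m}$, and with $|\Delta^1_h D^\alpha F(x)|\le C\,\phi(|h|)\,|h|^{-n}$ for $|\alpha|=n$.

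The decomposition $f=F+(f-F)$ then splits the problem in two. For the remainder $f-F$, the triangle inequality against the common polynomial $P_x$ gives $r^{-d/p}\|f-F\|_{L^p(B(x,r))}\le C\,\phi(r)$ for almost every $x\in E_{1/m}$: indeed $f$ is approximated by $P_x$ at rate $\phi$ by hypothesis, while $F$ is approximated by $P_x=T^n_xF$ at the same rate by Taylor's formula combined with the modulus of continuity $\phi(|h|)|h|^{-n}$ of its $n$-th order derivatives. Since $f-F=0$ almost everywhere on $E_{1/m}$, I would then invoke Lemma \ref{lemme 2 Rademacher} with the weight $\phi$ (legitimate since $\underline{b}(\phi)>0$) to upgrade this to $r^{-d/p}\|f-F\|_{L^p(B(x,r))}=o(\phi(r))$ for almost every $x\in E_{1/m}$. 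This part transfers verbatim from the integer case.

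It remains to show that $F$ itself lies in $t^p_{\phi,n+1}(x)$ for almost every $x$, and this is where the reduction to integer indices must be performed. The natural attempt is to apply the integer-index Theorem \ref{Rademacher's Theorem Ziemer} at level $n+1$ to $F$: because $\overline{b}(\phi)<n+1$ forces $r^{n+1}=o(\phi(r))$, one checks that $t^p_{n+1}(x)\subseteq t^p_{\phi,n+1}(x)$, so it would suffice to verify $F\in T^p_{n+1}(x)$ for every $x\in E_{1/m}$ and then read off $F\in t^p_{n+1}(x)$ for almost every $x$. Granting $F\in t^p_{\phi,n+1}(x)$, the triangle inequality with the second paragraph yields $f\in t^p_{\phi,n+1}(x)$ almost everywhere on $E_{1/m}$, and a countable union over $m$ (and over $j$ in the unbounded case) completes the argument.

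The main obstacle is exactly the verification of $F\in T^p_{n+1}(x)$, that is, an $O(r^{n+1})$ bound with a degree-$n$ polynomial. Whitney only provides $|\Delta^1_h D^\alpha F(x)|\le C\,\phi(|h|)|h|^{-n}$, so the $n$-th order derivatives are H\"older of order $\underline{b}(\phi)-n\in(0,1)$ but in general not Lipschitz; consequently the Taylor remainder of $F$ is only $O(\phi(|h|))$, which is strictly larger than $O(|h|^{n+1})$ since $\phi(r)\gg r^{n+1}$. Thus $F\in T^p_{n+1}$ can genuinely fail and Theorem \ref{Rademacher's Theorem Ziemer} does not apply directly, while the fallback estimate coming from $F\in W^p_{n,\mathrm{loc}}(\mathbb{R}^d)$ (Theorem \ref{Thm 1 ziemer sobolev}) and Theorem \ref{Thm 2 ziemer sobolev} with $m_1=m_2=n$ only gives the coarser rate $o(r^n)$, which is too weak because $\phi(r)=o(r^n)$. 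Bridging this gap, i.e. extracting a true little-$o$ at the fractional scale $\phi$ for $F$ while exploiting the extra degree-$(n+1)$ coefficient, is the crux and cannot follow from $W^p_n$-regularity alone; it is precisely here that one is forced either to impose additional hypotheses on $f$ or to pass to a suitably adapted $t^p_\phi$ space, in accordance with the sharpness and counterexample discussion announced in the abstract.
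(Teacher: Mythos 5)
Your reduction to bounded $E$, the Lusin/Whitney setup on the compacts $E_{1/m}$, and the treatment of the remainder $f-F$ via Lemma \ref{lemme 2 Rademacher} coincide exactly with the paper's proof. The gap is in your handling of $F$: you consider only two routes --- verifying $F\in T^p_{n+1}(x)$ pointwise so as to feed $F$ to Theorem \ref{Rademacher's Theorem Ziemer}, or using $F\in W^p_{n,\mathrm{loc}}$ together with Theorem \ref{Thm 2 ziemer sobolev} at level $m_1=m_2=n$ --- and, after correctly observing that the first can fail and the second only yields the too-weak rate $o(r^n)$, you declare the theorem out of reach without extra hypotheses. The paper takes a third route that you did not consider: it asserts $F\in W^p_{n+1,\mathrm{loc}}(\mathbb{R}^d)$ via Theorem \ref{Thm 1 ziemer sobolev} and then applies Theorem \ref{Thm 2 ziemer sobolev} with $m_1=m_2=n+1$ directly to $F$. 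This produces, for almost every $x$, a polynomial $Q_x$ of degree at most $n+1$ with $r^{-d/p}\|F-Q_x\|_{L^p(B(x,r))}=o(r^{n+1})$, and since $\overline{b}(\phi)<n+1$ forces $r^{n+1}=o(\phi(r))$, this combines with $r^{-d/p}\|f-F\|_{L^p(B(x,r))}=o(\phi(r))$ to give $f\in t^p_{\phi,n+1}(x)$ almost everywhere; the degree-$(n+1)$ polynomial is admissible precisely because the conclusion is stated in $t^p_{\phi,n+1}$ rather than $t^p_{\phi,n}$. The idea you are missing is that the pointwise hypothesis $F\in T^p_{n+1}(x)$ is never needed: the almost-everywhere $L^p$-Taylor expansion of order $n+1$ for Sobolev functions applies to $F$ once one has the single global fact $F\in W^p_{n+1,\mathrm{loc}}$, exactly as in the paper's own proof of the integer-index Theorem \ref{Rademacher's Theorem Ziemer}.

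That said, your skepticism does land on the most delicate point of the published argument. Deducing $F\in W^p_{n+1,\mathrm{loc}}$ from Theorem \ref{Thm 1 ziemer sobolev} requires the derivatives $D^\alpha F$, $|\alpha|=n$, to be absolutely continuous on almost every axis-parallel segment with partial derivatives in $L^p_{\mathrm{loc}}$, whereas the Whitney estimate only guarantees $|\Delta^1_h D^\alpha F(x)|\le C\,\phi(|h|)\,|h|^{-n}$, i.e.\ a H\"older-type modulus of exponent strictly less than $1$ since $\overline{b}(\phi)<n+1$; the paper supplies no further justification for this step. Your analysis correctly identifies this as the pressure point, but the paper's proof does not stop there, and as written your proposal does not establish the stated theorem.
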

\begin{proof}~
\smallskip\\$\bullet$ Let us first assume that $E$ is bounded. By lemma \ref{lemme 1 Rademacher} and Lusin's Theorem, there exist compacts $E_{1/m}\subseteq E$, $m\in\mathbb{N}$, such that
\[\mathcal{L}\left(E\backslash E_{1/m}\right)\leq\frac{1}{m}\]
and such that \[\xi_f : E_{1/m} \rightarrow [0,\infty) : x\mapsto \|f\|_{T^p_\phi(x)}\] is continuous. Let $m\in\mathbb{N}$, there exists a constant $M=M(m)>0$ such that
 \[\|f\|_{T^p_\phi(x)}\leq M\quad\forall x\in E_{1/m}.\]
Let $x\in E_{1/m}$, we denote by $P_x$ the polynomial of degree strictly less than or equal to $n$ associated with $f$. Thanks to Whitney's extension Theorem \ref{Gen Whitney}, there exists an open set $U$ containing $E_{1/m}$ and a function $F\in C^n(U)$ such that $F=f$ almost everywhere $E_{1/m}$ and such that for all $|\beta|\leq n$ and for almost all $x\in E_{1/m}$,
\[D^\beta F(x)=D^\beta P_x(x).\]
By Theorem \ref{Thm 1 ziemer sobolev}, $F\in W^p_{n+1,\text{loc}}(\mathbb{R}^d)$. Using Theorem \ref{Thm 2 ziemer sobolev} with $m_1=m_2=n+1$, it follows that for almost every $x\in \mathbb{R}^d$, there exists a polynomial $Q_x$ of degree at most $n$ such that
\[\,r^{-d/p}\,\|F-Q_x\|_{L^p(B(x,r))}= o(r^{n+1})\quad \text{when } r\rightarrow 0^+.\]
Moreover, for almost every $x\in E_{1/m}$,  
\[r^{-d/p}\,\|f-F\|_{L^p(B(x,r))}\leq C\,\phi(r).\]
Using Lemma \ref{lemme 2 Rademacher}, we get
\[r^{-d/p}\|f-F\|_{L^p(B(x,r))}= o(\phi(r))\text{ when } r\rightarrow 0^+,\]
Thus, for almost all $x\in E_{1/m}$, 
\begin{align*}
r^{-d/p}\,\|f-Q_x\|_{L^p(B(x,r))}&\leq r^{-d/p}\,\|f-F\|_{L^p(B(x,r))}+r^{-d/p}\,\|F-Q_x\|_{L^p(B(x,r))}\\
&= o(\phi(r)) + o(r^{n+1})\\&= o(\phi(r))\text{ when } r\rightarrow 0^+.
\end{align*}
We have thus proved the result for sets $E_{1/m}$, $m\in\mathbb{N}$. Since a countable union of negligible sets is negligible, we deduce the conclusion for $E$.
\smallskip\\$\bullet$ If $E$ is unbounded, it suffices to apply the previous case to the sets
\[
E^{(j)} = E \cap B(0,j), \quad j \in \mathbb{N},
\]
and to conclude again using the fact that a countable union of negligible sets is negligible.
\end{proof}

\begin{example}\label{Example 1}
Suppose that $\overline{b}(\phi)<h_p^f(x)$ for all $x\in E$, then the statement
\[f\in T^p_\phi(x) \quad\forall x\in E\implies f\in t^p_{\phi,n}(x) \quad\text{for almost all }x\in E\]
is always true since, in this case, $f\in t^p_{\phi,n}(x)$ for all $x\in E$.
\end{example}

\begin{example}\label{Example 2}
The following observation with Brownian motion led us to conclude that Rademacher's theorem does not hold for fractional indices in the space $t^p_{\phi,0}$ (as opposed to $t^p_{\phi,1}$). Indeed, using (\ref{equa Brownien}), it is clear that almost surely and for almost every $t_0 \in \mathbb{R}$, $B\in T^p_\phi(t_0)$, where $\phi(t)\sim (t \log \log 1/|t|)^{1/2}$ for all $t>0$ but 
almost surely and for almost every $t_0 \in \mathbb{R}$, 
\[\limsup_{r\rightarrow 0^+}\frac{r^{-1/p}(\int_{B(t_0,r)}|B(t)-B(t_0)|^pdt)^{1/p}}{\sqrt{r\log \log 1/r}}=\sqrt{2}. \]
\end{example}

\section{Conclusion}
Let $\phi\in\mathcal{B}$ and $n\in\mathbb{N}_0$ are such that $n<\underline{b}(\phi)\leq \overline{b}(\phi)<n+1$. On the one hand, we have the inclusions
\[
t^p_{\phi,n}(x_0) \subseteq T^p_{\phi}(x_0)
\quad \text{and} \quad
t^p_{\phi,n}(x_0) \subseteq t^p_{\phi,n+1}(x_0),
\]
but in general we do not have the inclusion
\[
t^p_{\phi,n+1}(x_0) \subseteq T^p_{\phi}(x_0).
\]

On the other hand, our Rademacher's Theorem \ref{Rademacher's Theorem for Calderon-Zygmund-type Spaces} states that
\[
f \in T^p_\phi(x) \,\,\,\,\,\, \forall x \in E
\quad \implies \quad
f \in t^p_{\phi,n+1}(x) \quad \text{for almost all} \ x \in E.
\]

Moreover, Example \ref{Example 1} shows that this latter result is only of pointwise interest when 
$\overline{b}(\phi) \geq h_p^f(x)$, since otherwise one already has 
$f \in t^p_{\phi,n}(x)$. Example \ref{Example 2} further illustrates the limitations of our result, and shows that in general one cannot expect $f$ to belong almost everywhere to $t^p_{\phi,n}(x)$.

\section*{Declarations}

\begin{itemize}
\item Funding : The author did not receive any specific funding for this work.
\item Ethics approval : Not applicable.
\item Availability of data and materials : Not applicable.
\end{itemize}


\begin{thebibliography}{99}

\bibitem{Boyd:67}
D.~W. Boyd.
\newblock The {H}ilbert {T}ransform on {R}earrangement-{I}nvariant {S}paces.
\newblock {\em Canadian J. Math.}, 19, 1967.

\bibitem{Boyd:69}
D.~W. Boyd.
\newblock Indices of function spaces and their relationship to interpolation.
\newblock {\em Canadian J. Math.}, 21:1245--1254, 1969.

\bibitem{Calderon:61}
A.~P. Calder{\'o}n and A.~Zygmund.
\newblock Local properties of solutions of elliptic partial differential equations.
\newblock {\em Studia Math.}, 20:181--225, 1961.

\bibitem{Cheeger:99}
J.~Cheeger.
\newblock Differentiability of {L}ipschitz functions on metric measure spaces.
\newblock {\em Geom. Funct. Anal.}, 9(3):428--517, 1999.

\bibitem{Evans:15}
L.~C. Evans and R.~F. Gariepy.
\newblock {\em Measure theory and fine properties of functions}.
\newblock Textbooks in Mathematics, revised edition of the 1992 original ed., CRC Press, Boca Raton, FL, 2015.

\bibitem{Jaffard:18}
S.~Jaffard and B.~Martin.
\newblock Multifractal analysis of the {B}rjuno function.
\newblock {\em Invent. Math.}, 212:109--132, 2018.

\bibitem{Johnson:79}
W.~B. Johnson, B.~Maurey, G.~Schechtman, and L.~Tzafriri.
\newblock {\em Symmetric structures in {B}anach spaces}.
\newblock Memoirs of the Amer. Math. Soc., Providence, Rhode Island, 1979.

\bibitem{Krein:82}
S.~G. Krein, Ju.~I. Petunin, and E.~M. Semenov.
\newblock {\em Interpolation of {L}inear {O}perators}, volume~54.
\newblock Translations of Mathematical Monographs, Amer. Math. Soc., Providence, Rhode Island, 1982.

\bibitem{Lamby:21}
T.~Lamby.
\newblock Espaces de {C}alder{\'o}n-{Z}ygmund et r{\'e}gularit{\'e} de fonctions non localement born{\'e}es.
\newblock Master's thesis, Universit{\'e} de Li{\`e}ge, 2021.

\bibitem{Lamby:22}
T.~Lamby and S.~Nicolay.
\newblock Some remarks on the {B}oyd functions related to the admissible sequences.
\newblock {\em Z. Anal. Anwend.}, 41:211--227, 2022.

\bibitem{Lamby:24}
T.~Lamby and S.~Nicolay.
\newblock Interpolation with a function parameter from the category point of view.
\newblock {\em J. Funct. Anal.}, 286, 2024.

\bibitem{Loosveldt:20}
L.~Loosveldt and S.~Nicolay.
\newblock Generalized {$T_u^p$} spaces: On the trail of {C}alder{\'o}n and {Z}ygmund.
\newblock {\em Diss. Math.}, 554, 2020.

\bibitem{Luzin:12}
N.~Luzin.
\newblock Sur les propri{\'e}t{\'e}s des fonctions mesurables.
\newblock {\em Comptes rendus hebdomadaires des s{\'e}ances de l'Acad{\'e}mie des sciences}, 154:1688--1690, 1912.

\bibitem{Merucci:84}
C.~Merucci.
\newblock Applications of interpolation with a function parameter to {L}orentz, {S}obolev and {B}esov spaces.
\newblock In M.~Cwikel and J.~Peetre, editors, {\em Interpolation Spaces and Allied Topics in Analysis}, pages 183--201. Springer, Proceedings of the Conference Held in Lund, Sweden, August 29--September 1, 1983, 1984.

\bibitem{Morters:10}
P.~M{\"o}rters and Y.~Peres.
\newblock {\em Brownian Motion}.
\newblock Cambridge Series in Statistical and Probabilistic Mathematics, vol.~30, Cambridge University Press, Cambridge, 2010.

\bibitem{Seuret:17}
S.~Seuret and A.~Ubis.
\newblock Local {$L^2$}-regularity of {R}iemann's {F}ourier series.
\newblock {\em Ann. Inst. Fourier}, 67:2237--2264, 2017.

\bibitem{Ziemer:89}
W.~P. Ziemer.
\newblock {\em Weakly differentiable functions}.
\newblock Graduate Texts in Mathematics, vol.~120, Springer, New York, 1989.


\end{thebibliography}
\end{document}